\newtheorem{defin}{}
\newtheorem{saetze}[defin]{}
\newtheorem{conjec}[defin]{}
\newtheorem{lemmas}[defin]{}
\newtheorem{folger}[defin]{}
\newtheorem{bemerk}[defin]{}
\newenvironment{theorem}  {\begin{saetze}\it {\bf Theorem:}}{\end{saetze}}
\newenvironment{lemma}    {\begin{lemmas}\it {\bf Lemma:}}{\end{lemmas}}
\newenvironment{proof}    {{\it Proof}:}{{\hfill \fillbox \bigskip}}
\newcommand{\fillbox}{\mbox{$\bullet$}}
\newcommand{\ra}{\rightarrow}
\newcommand{\Ra}{\Rightarrow}
\newcommand{\La}{\Leftarrow}
\newcommand{\ms}{\mapsto}
\newcommand{\N}{\mathbb N}
\newcommand{\Z}{\mathbb Z}
\newcommand{\T}{\mathcal T}
\newenvironment{items}{\begin{list}{$\alph{item})$}
{\labelwidth18pt \leftmargin18pt \topsep3pt \itemsep1pt \parsep0pt}}
{\end{list}}
\begin{document}

\title{Hall polynomials for the torsion free nilpotent groups \\
       of Hirsch length at most 5}
\author{Bettina Eick and Ann-Kristin Engel}
\date{\today}
\maketitle

\begin{abstract}
A theorem by Hall asserts that the multiplication in torsion free 
nilpotent groups of finite Hirsch length can be facilitated by 
polynomials. In this note we exhibit explicit Hall polynomials for 
the torsion free nilpotent groups of Hirsch length at most 5.
\end{abstract}

\section{Introduction}

Let $\T$ denote the class of torsion free finitely generated nilpotent
groups. By \cite[p. 137]{Rob82}, each $\T$-group has a central series
with infinite cyclic quotients. The length of such a central series is
the Hirsch length of the group; see \cite[p. 152]{Rob82} for background.

Let $G$ be a group. We say that a sequence of elements $(g_1, \ldots, g_n)$
is a {\em $\T$-basis} for $G$ if the subgroups $G_i = \langle g_i, \ldots, 
g_n \rangle$ for $1 \leq i \leq n+1$ define a central series $G = G_1 > G_2 
> \ldots > G_n > G_{n+1} = \{1\}$ with infinite quotients $G_i/G_{i+1}$ for 
$1 \leq i \leq n$. The construction 
implies that each quotient $G_i/G_{i+1}$ is cyclic and generated by 
$g_i G_{i+1}$. It is not difficult to observe that a group $G$ has a 
$\T$-basis of length $n$ if and only if $G$ is $\T$-group of Hirsch length 
$n$.

We note that a $\T$-basis $(g_1, \ldots, g_n)$ of a $\T$-group $G$ is a 
polycyclic sequence for $G$ in the sense of \cite[Sec. 8.1]{HEO05}. Thus 
as in \cite[Lemma 8.3]{HEO05}, it follows that each element $g$ of $G$ can 
be written uniquely as $g = g_1^{a_1} \cdots g_n^{a_n}$ for certain $a_1, 
\ldots, a_n \in \Z$. Hence the $\T$-basis induces a bijection
\[ \beta : \Z^n \ra G : (a_1, \ldots, a_n) \ra g_1^{a_1} \cdots g_n^{a_n}. \]
The bijection $\beta$ is not a group homomorphism in general. More precisely,
$\beta$ translates the multiplication in $G$ to certain functions $p_i :  
\Z^n \times \Z^n \ra \Z$ for $1 \leq i \leq n$ with
\[ (g_1^{a_1} \cdots g_n^{a_n}) \cdot (g_1^{b_1} \cdots g_n^{b_n})
   = g_1^{p_1(a,b)} \cdots g_n^{p_n(a,b)}. \]
A fundamental theorem by Hall \cite{Hal69, Hal88} asserts that each function 
$p_i(a,b)$ can be described by a polynomial. These polynomials are nowadays 
called {\em Hall polynomials}.

Our aim here is to determine Hall polynomials for {\em all} $\T$-groups
of Hirsch length at most $5$.
Methods to compute Hall polynomials for a fixed $\T$-group given by a 
consistent polycyclic presentation have been exhibited by Sims 
\cite[page 441ff]{Sim94} and by Leedham-Green \& Soicher \cite{LGS98}. 
Our method is based on the approach by Sims and uses this in a generic
form so that it applies to the infinitely many $\T$-groups of Hirsch
length at most $5$ simultaneously.

An application of the Hall polynomials determined here is the classification 
of the torsion free nilpotent groups of Hirsch length at most $5$ as 
described in \cite{EEi16}.

\section{Nilpotent presentations and consistency}
\label{presen}

As a first step towards our aim we exhibit consistent nilpotent 
presentations for the $\T$-groups of Hirsch length at most $5$. We refer 
to \cite[Sec. 8.1]{HEO05} for details on this type of presentation.
For $n \in \N$ and $t = (t_{ijk} \mid 1 \leq i < j < k \leq n) \in 
\Z^{n \choose 3}$ we denote
\[ G(t) = \langle g_1, \ldots, g_n \mid 
           [g_j, g_i] = g_{j+1}^{t_{i,j,j+1}} \cdots g_n^{t_{i,j,n}}
            \mbox{ for } 1 \leq i < j \leq n \rangle.\]
Then the presentation defining $G(t)$ is a polycyclic presentation
and the relation imply that $G(t)$ is a nilpotent group of Hirsch
length at most $n$. This presentation is consistent if and only if 
$G(t)$ has Hirsch length precisely $n$, or, equivalently, if the 
generators of the presentation form a $\T$-basis for $G(t)$.

\begin{lemma} \label{cons}
Let $G$ be a $\T$-group of Hirsch length $5$ and let $(h_1, \ldots, h_5)$ 
be a $\T$-basis for $G$. Then there exist a unique $t \in \Z^{10}$ so that
the map $G(t) \ra G : g_i \ms h_i$ extends to an isomorphism. This $t$
satisfies the relations $t_{123} t_{345} = 0$ and $t_{124} t_{345} 
+ t_{145} t_{234} = t_{134} t_{245}$.
\end{lemma}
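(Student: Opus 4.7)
The plan is to split the proof into two parts: first, produce the unique $t$ and establish the isomorphism $G(t)\cong G$; second, derive the two polynomial identities from associativity in $G$.

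For existence and uniqueness of $t$, I would exploit the $\T$-basis property directly. Since $(h_1,\ldots,h_5)$ is a $\T$-basis, each commutator $[h_j,h_i]$ lies in $G_{j+1}=\langle h_{j+1},\ldots,h_5\rangle$ and, by \cite[Lemma 8.3]{HEO05}, has a unique normal-form expression $h_{j+1}^{t_{i,j,j+1}}\cdots h_5^{t_{i,j,5}}$. This gives a unique $t\in\Z^{10}$ for which the $h_i$ in $G$ satisfy all defining relations of $G(t)$, so $g_i\ms h_i$ extends, by the universal property of the presentation, to a surjective homomorphism $\varphi:G(t)\to G$. For injectivity I would invoke normal-form uniqueness in $G$: any identity $g_1^{a_1}\cdots g_5^{a_5}=g_1^{b_1}\cdots g_5^{b_5}$ holding in $G(t)$ maps under $\varphi$ to $h_1^{a_1}\cdots h_5^{a_5}=h_1^{b_1}\cdots h_5^{b_5}$ in $G$, forcing $a_i=b_i$ for all $i$; hence $\varphi$ is a bijection on normal forms and thus an isomorphism.

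For the two identities I would enforce the associativity relations $(g_k g_j) g_i = g_k(g_j g_i)$ in $G(t)$ for $1\leq i<j<k\leq 5$, each of which must hold because $\varphi$ is an isomorphism and multiplication in $G$ is associative. Of the $\binom{5}{3}=10$ triples $(i,j,k)$, the six with $k=5$ are automatic since $g_5$ is central in $G$. Of the remaining four, collection shows that the triples $(1,3,4)$ and $(2,3,4)$ reduce to trivial identities. The triple $(1,2,4)$ yields agreement on the $g_1,\ldots,g_4$ exponents of both normal forms, and comparison of the $g_5$-exponents gives exactly the constraint $t_{123}t_{345}=0$. The triple $(1,2,3)$ gives an a priori cubic constraint of the form $t_{124}t_{345}+t_{145}t_{234}-t_{134}t_{245}=t_{123}t_{345}(t_{234}+t_{134})$; substituting the first identity makes the right-hand side vanish and yields the second stated identity.

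The main obstacle is the bookkeeping in the collection process on the two critical triples: each rewriting step applies only a single commutator rule $g_j g_i = g_i g_j \cdot g_{j+1}^{t_{i,j,j+1}}\cdots g_5^{t_{i,j,5}}$, but the $g_5$-exponent accumulates quadratic contributions as $g_3,g_4$ are moved past each other and past $g_1,g_2$. Systematic exploitation of the centrality of $g_5$ and of the fact that each commutator $[g_j,g_i]$ lies strictly above $g_j$ in the central series keeps the computation tractable, and the lower exponents agree automatically so that only the $g_5$-exponents carry the content of the two listed identities.
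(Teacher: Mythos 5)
Your proposal is correct and follows essentially the same route as the paper: construct the unique $t$ from the normal forms of the commutators $[h_j,h_i]$, obtain an epimorphism $G(t)\to G$ that is an isomorphism (the paper argues via Hirsch length rather than normal-form uniqueness, a cosmetic difference), and extract the two relations from the associativity triples $(1,2,4)$ and $(1,2,3)$, the other nontrivial triples being vacuous. Your observation that the $(1,2,3)$ triple first yields the cubic identity $t_{124}t_{345}+t_{145}t_{234}-t_{134}t_{245}=t_{123}t_{345}(t_{234}+t_{134})$, which reduces to the stated quadratic one after substituting $t_{123}t_{345}=0$, matches what the paper does implicitly in its ``similar form'' computation.
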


\begin{proof}
We first construct a suitable $t \in \Z^{10}$. The definition of $\T$-basis 
implies directly that $[h_j, h_i] \in G_{j+1}$ for each $1 \leq i < j 
\leq 5$. As each element of $G_{j+1}$ can be written (uniquely) as 
$h_{j+1}^{a_{j+1}} \cdots h_5^{a_5}$, this yields that there exist (unique)
$t_{ijk}$ with 
\[ [h_j, h_i] = h_{j+1}^{t_{i,j,j+1}} \cdots h_5^{t_{i,j,5}}\]
for $1 \leq i < j \leq 5$. Using this constructed $t \in \Z^{10}$ then
induces that the map $G(t) \ra G : g_i \ms h_i$ is an epimorphism. It
is an isomorphism, as $G$ has Hirsch length $5$ and $G(t)$ has at most
Hirsch length $5$. We now observe that the restrictions on $t$ are 
satisfied. For this purpose we evaluate the equation $(h_4 h_2) h_1 
= h_4 (h_2 h_1)$ in $G$ and obtain that
\begin{eqnarray*}
(h_4 h_2) h_1 
   &=& (h_2 h_4 h_5^{t_{245}}) h_1 \\ 
   &=& h_1 h_2^{h_1} h_4^{h_1} h_5^{t_{245}} \\
   &=& h_1 h_2 h_3^{t_{123}} h_4^{t_{124}} h_5^{t_{125}} 
       h_4 h_5^{t_{145}} h_5^{t_{245}} \\
   &=& h_1 h_2 h_3^{t_{123}} h_4^{t_{124}+1} h_5^{t_{125}+t_{145}+t_{245}}
       \mbox{ and } \\
h_4 (h_2 h_1)
   &=& h_4 (h_1 h_2 h_3^{t_{123}} h_4^{t_{124}} h_5^{t_{125}}) \\
   &=& h_1 h_4 h_2 h_3^{t_{123}} h_4^{t_{124}} h_5^{t_{125}+t_{145}}\\
   &=& h_1 h_2 h_4 h_3^{t_{123}} h_4^{t_{124}} 
       h_5^{t_{125}+t_{145}+t_{245}}\\
   &=& h_1 h_2 h_3^{t_{123}} h_4^{t_{124}+1} 
       h_5^{t_{125}+t_{145}+t_{245} + t_{123} t_{345}}.
\end{eqnarray*}
Comparison of the two results yields that $t_{123} t_{345} = 0$. 
Evaluating the equation $(h_3 h_2) h_1 = h_3 (h_2 h_1)$ in a similar form 
yields that $t_{124} t_{345} + t_{145} t_{234} = t_{134} t_{245}$.
\end{proof}

As a next step, we show that the restrictions on $t$ in Lemma \ref{cons}
yield that the presentation defining $G(t)$ is consistent.

\begin{lemma} \label{pres}
Let $n \in \{1, \ldots, 5\}$ and $t \in \Z^{n \choose 3}$.
\begin{items}
\item[\rm (a)]
Let $n \leq 4$. Then the presentation defining $G(t)$ is consistent.
\item[\rm (b)]
Let $n = 5$. 
Then the presentation defining $G(t)$ is consistent if and only if 
$t_{123} t_{345} = 0$ and $t_{124} t_{345} + t_{145} t_{234} 
= t_{134} t_{245}$. 
\end{items}
\end{lemma}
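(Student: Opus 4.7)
My plan is to invoke the standard consistency criterion for polycyclic presentations without power relations: by \cite[Sec.~8.1]{HEO05}, following \cite[p.~424ff]{Sim94}, the presentation defining $G(t)$ is consistent if and only if, for every triple $1 \leq i < j < k \leq n$, collection of $(g_k g_j) g_i$ and of $g_k (g_j g_i)$ into the normal form $g_1^{a_1} \cdots g_n^{a_n}$ produces the same exponent vector. No power-relation checks arise because all generators are declared of infinite order in $G(t)$.

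For part (a), I would enumerate the at most $\binom{4}{3}=4$ triples $(i,j,k)$ with $k \leq 4$ and carry out the collection directly for each. The structural point is that $g_n$ is central and $[g_j, g_i] \in G_{j+1}$, so every correction term produced by a double commutator during collection lies in a subgroup $G_\ell$ with $\ell > n$ and is therefore trivial. Consequently both sides collect to the same word for every $t \in \Z^{n \choose 3}$, and the presentation is consistent with no restriction on $t$.

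For part (b), I would repeat the procedure for the ten triples with $k \leq 5$. Eight of them---namely $(1,2,5)$, $(1,3,4)$, $(1,3,5)$, $(1,4,5)$, $(2,3,4)$, $(2,3,5)$, $(2,4,5)$, $(3,4,5)$---collapse to tautologies for the same structural reason as in part (a): either $g_5$ is one of the generators being moved and its centrality makes the check immediate, or the double commutators produced during collection land in $G_6 = \{1\}$ and hence cancel between the two sides. The two remaining triples are precisely those considered in the proof of Lemma~\ref{cons}: the triple $(1,2,4)$ reproduces the displayed calculation there and yields $t_{123} t_{345} = 0$, while the triple $(1,2,3)$, handled by an analogous but longer collection, yields $t_{124} t_{345} + t_{145} t_{234} = t_{134} t_{245}$. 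The consistency criterion therefore holds if and only if both identities are satisfied, which is the equivalence claimed in (b).

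The hard part is the bookkeeping in the ten collection computations for $n=5$. Each individual check is routine---the computation displayed inside the proof of Lemma~\ref{cons} is representative of the level of care required---but the exponents of $g_4$ and $g_5$ generated by successive commutator substitutions must be tracked in the correct order, and one must verify that no further non-trivial polynomial identity is hiding in any of the other eight triples. Once these checks are in hand, part (a) follows as a degenerate case with fewer generators, and the ``if and only if'' in part (b) follows because exactly the triples $(1,2,3)$ and $(1,2,4)$ contribute non-trivial polynomial constraints on $t$.
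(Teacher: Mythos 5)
Your route is essentially the paper's: reduce consistency to the finitely many overlap checks of \cite[p.~424]{Sim94}, discard those in which $g_5$ is the largest generator by centrality, and observe that among the remaining associativity overlaps exactly the triples $(1,2,4)$ and $(1,2,3)$ produce the constraints $t_{123}t_{345}=0$ and $t_{124}t_{345}+t_{145}t_{234}=t_{134}t_{245}$, the other triples being tautologies. (The paper obtains the ``only if'' direction even more cheaply by quoting Lemma~\ref{cons}: consistency makes $(g_1,\dots,g_5)$ a $\T$-basis, and Lemma~\ref{cons} already shows the structure constants of any $\T$-basis satisfy the two identities. Your derivation of that direction from the overlap computations is equivalent in content.)

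There is, however, one genuine gap. The consistency criterion you invoke does not consist of the associativity overlaps alone: for generators of infinite relative order the criterion of \cite[p.~424]{Sim94} (see also \cite[Lemma 2.10]{Eic01}) additionally requires the inverse overlaps $g_k = (g_k g_i^{-1})g_i$ for $1 \leq i < k \leq n$. These are not power-relation checks, so your stated reason for their absence --- that all generators are declared of infinite order --- is precisely backwards: it is the infinite relative orders that make these checks mandatory, since the presentation only prescribes conjugation by $g_i$, not by $g_i^{-1}$, and one must verify that conjugation by $g_i$ acts invertibly on $\langle g_{i+1},\dots,g_n\rangle$. The paper lists the six non-trivial ones as $(R5)$--$(R10)$, computes the conjugates $g_k^{g_i^{-1}}$ explicitly, and checks them. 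They turn out to impose no further conditions on $t$, but that is a fact to be verified, not something that follows from the absence of power relations; as written, your argument silently drops these checks, and without them the ``if'' direction of (b) (and all of (a)) is not established.
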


\begin{proof}
We prove (b) only and note that (a) follows by a similar (and easier)
calculation. \\
$\Ra:$ Suppose that $G(t)$ is consistent. Then the generators of this 
presentation form a $\T$-basis for $G(t)$. Thus Lemma \ref{cons} yields
the result. \\
$\La:$ Suppose that $t \in \Z^{10}$ with $t_{123} t_{345} = 0$ and 
$t_{124} t_{345} + t_{145} t_{234} = t_{134} t_{245}$ is given. We show
that $G(t)$ is consistent in this case. For this purpose we evaluate
the finitely many consistency relations, see 
\cite[page 424]{Sim94} (or \cite[Lemma 2.10]{Eic01}). 
In the case considered here, these consistency 
relations have the form
\[ (g_k g_j) g_i = g_k (g_j g_i) \mbox{ for } 1 \leq i < j < k \leq 5, 
  \mbox{ and }\]
\[ g_k = (g_k g_i^{-1})g_i \mbox{ for } 1 \leq i < k \leq 5.\]
It is not difficult to observe that these relations always hold if $k = 5$, 
since $g_5$ is central in $G$. Hence it remains to consider the relations
\begin{eqnarray*}
(R1) && (g_4 g_3) g_1 = g_4 (g_3 g_1), \\
(R2) && (g_4 g_2) g_1 = g_4 (g_2 g_1), \\
(R3) && (g_3 g_2) g_1 = g_3 (g_2 g_1), \\
(R4) && (g_4 g_3) g_2 = g_4 (g_3 g_2), \\
(R5) && g_2 = (g_2 g_1^{-1}) g_1, \\
(R6) && g_3 = (g_3 g_1^{-1}) g_1, \\
(R7) && g_4 = (g_4 g_1^{-1}) g_1, \\
(R8) && g_3 = (g_3 g_2^{-1}) g_2, \\
(R9) && g_4 = (g_4 g_2^{-1}) g_2, \\
(R10) && g_4 = (g_4 g_3^{-1}) g_3. 
\end{eqnarray*}
The relations $(R1)$ - $(R4)$ can be evaluated similar to the proof of
Lemma \ref{cons} and are satisfied due to the equations imposed on the 
exponents $t_{ijk}$. For the relations $(R5)$ - $(R10)$ we note that
\begin{eqnarray*}
g_4 g_1^{-1} &=& g_1^{-1} g_4 g_5^{-t_{145}}, \\
g_3 g_1^{-1} &=& g_1^{-1} g_3 g_4^{-t_{134}} g_5^{t_{134} t_{145}-t_{135}}, \\
g_2 g_1^{-1} &=& g_1^{-1} g_2 g_3^{-t_{123}} g_4^{t_{123} t_{134} - t_{124}}
                 g_5^{-t_{123} t_{134} t_{145} + t_{123} t_{135} 
                                  + t_{124} t_{145} - t_{125}}, \\
g_4 g_2^{-1} &=& g_2^{-1} g_4 g_5^{-t_{245}}, \\
g_3 g_2^{-1} &=& g_2^{-1} g_3 g_4^{-t_{234}} g_5^{t_{234} t_{245}-t_{235}}, \\
g_3 g_2^{-1} &=& g_2^{-1} g_3 g_5^{-t_{235}}. \\
\end{eqnarray*}
These conjugates allow to evaluate the relations $(R5)$-$(R10)$ and thus
to determine that the presentation of $G(t)$ is consistent.
\end{proof}

\section{Hall polynomials}
\label{hallpoly}

Next, we consider Hall polynomials for the $\T$-groups of Hirsch length at
most $5$. We assume that the considered groups are given by consistent 
presentations of the form $G(t)$ as exhibited in Section \ref{presen}.
We denote $s_2(x) = x(x-1)/2$ and $s_3(x) = x(x-1)(x-2)/6$. 

\begin{theorem} \label{hall}
Let $t \in \Z^{10}$ so that $G(t)$ is consistent. Then the multiplication
in $G(t)$ can be described by Hall polynomials $p_i(a,b)$ with $1 \leq i
\leq 5$ and such polynomials are given by
\begin{eqnarray*}
p_1 &=& a_1 + b_1, \\
p_2 &=& a_2 + b_2, \\
p_3 &=& a_3 + b_3 + t_{123} a_2 b_1,  \\
p_4 &=& a_4 + b_4 + t_{124} a_2 b_1 + t_{134} a_3 b_1 + t_{234} a_3 b_2 \\
    && + t_{123} t_{134} a_2 s_2(b_1) + t_{123} t_{234} s_2(a_2) b_1
       + t_{123} t_{234} a_2 b_1 b_2, \\
p_5 &=& a_5 + b_5
+ t_{345} a_4 b_3
+ t_{245} a_4 b_2
+ t_{235} a_3 b_2 
+ t_{145} a_4 b_1
+ t_{135} a_3 b_1
+ t_{125} a_2 b_1 \\
&&
+ t_{234} t_{345} s_2(a_3) b_2
+ t_{234} t_{245} a_3 s_2(b_2) 
+ t_{134} t_{345} s_2(a_3) b_1
+ t_{134} t_{145} a_3 s_2(b_1) \\
&&
+ t_{234} t_{345} a_3 b_2 b_3
+ t_{134} t_{345} a_3 b_1 b_3 
+ t_{134} t_{245} a_3 b_1 b_2 
+ t_{124} t_{345} a_2 b_1 b_3 \\
&&
+ t_{124} t_{345} a_2 a_3 b_1 
+ (t_{123} t_{235}+t_{124} t_{245}) a_2 b_1 b_2  \\
&&
+ (t_{123} t_{235}+t_{124} t_{245}) s_2(a_2) b_1 
+ (t_{123} t_{135} +t_{124} t_{145}) a_2 s_2(b_1) \\
&&
+ t_{123} t_{234} t_{245} a_2 b_1 s_2(b_2) 
+ t_{123} t_{134} t_{245} s_2(a_2) s_2(b_1) 
+ t_{123} t_{234} t_{245} s_3(a_2) b_1 \\
&&
+ t_{123} t_{134} t_{145} a_2 s_3(b_1) 
+ t_{123} t_{234} t_{245} s_2(a_2) b_1 b_2
+ t_{123} t_{134} t_{245} a_2 s_2(b_1) b_2.
\end{eqnarray*}
\end{theorem}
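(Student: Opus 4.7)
The plan is to apply the Sims collection process symbolically to the double word
\[ w = g_1^{a_1} g_2^{a_2} g_3^{a_3} g_4^{a_4} g_5^{a_5} \cdot g_1^{b_1} g_2^{b_2} g_3^{b_3} g_4^{b_4} g_5^{b_5}, \]
using only the defining relations of $G(t)$, and to read off $p_i(a,b)$ as the exponent of $g_i$ in the resulting normal form $g_1^{p_1} g_2^{p_2} g_3^{p_3} g_4^{p_4} g_5^{p_5}$. Since $g_5$ is central and every commutator $[g_j,g_i]$ lies in $G_{j+1}$, it is convenient to compute $p_i$ one index at a time, working down the central series $G = G_1 > G_2 > \cdots > G_5 > G_6 = 1$; the result modulo $G_{i+1}$ determines $p_i$.

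The first three polynomials fall out immediately. Modulo $G_2$ the group is cyclic and generated by $g_1$, so $p_1 = a_1 + b_1$. Modulo $G_3$ the generators $g_1$ and $g_2$ commute, so $p_2 = a_2 + b_2$. Modulo $G_4$ the only relevant commutator is $[g_2,g_1] \equiv g_3^{t_{123}}$, which is central in $G/G_4$, so the $a_2 b_1$ individual swaps of a $g_2$ past a $g_1$ each contribute a factor $g_3^{t_{123}}$, yielding $p_3 = a_3 + b_3 + t_{123} a_2 b_1$.

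For $p_4$ and $p_5$ I would first derive, by induction on the exponents and working from larger $j$ downwards, auxiliary two-variable collection formulas
\[ g_j^a g_i^b = g_i^b g_j^a \cdot u_{ij}(a,b), \qquad u_{ij}(a,b) \in G_{j+1}. \]
The binomial-coefficient factors $s_2$ and $s_3$ from the statement enter here, because summing a commutator correction over $0 \le r < a$ gives $s_2(a)$ and over $0 \le r < s < a$ gives $s_3(a)$. With the $u_{ij}$ in hand, the full collection of $w$ is a mechanical cascade: successively move $g_1^{b_1}$, then $g_2^{b_2}$, then $g_3^{b_3}$ to the left past all $g_k^{a_k}$ of larger index (the blocks $g_4^{b_4}$ and $g_5^{b_5}$ require no work since $g_5$ is central and $g_4$ commutes with $g_5$). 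Reading the $g_4$- and $g_5$-exponents of the resulting normal form then gives $p_4$ and $p_5$.

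The main obstacle is $p_5$: many nested commutations contribute, producing the numerous monomials involving $s_2$ and $s_3$ as well as the mixed terms such as $a_2 b_1 b_2$ and $s_2(a_2) b_1 b_2$ visible in the formula. Two features need care. First, when the same monomial of $p_5$ is reached by distinct paths through the collection, the consistency identities $t_{123} t_{345} = 0$ and $t_{124} t_{345} + t_{145} t_{234} = t_{134} t_{245}$ from Lemma \ref{pres} are precisely what forces these paths to agree; without them $p_5$ would not be a single well-defined polynomial. Second, the sheer number of contributions makes manual verification error-prone, but individual coefficients can be cross-checked by specialising $t$ so that only one or two $t_{ijk}$ are non-zero, which reduces each check to a two- or three-generator calculation.
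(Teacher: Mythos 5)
Your proposal follows essentially the same route as the paper: both first derive the pairwise swap formulas $g_j^x g_i^y = g_i^y g_j^x \cdot u_{ij}(x,y)$ (the paper's polynomials $r_{i,j,k}$) by induction on the exponents, which is exactly where the $s_2$ and $s_3$ terms arise, and then collect the double word to read off $p_1,\ldots,p_5$, invoking the consistency relations $t_{123}t_{345}=0$ and $t_{124}t_{345}+t_{145}t_{234}=t_{134}t_{245}$ along the way. The only difference is one of execution: the paper delegates the final cascade of substitutions to a symbolic collection program in \Gap{} rather than carrying it out and cross-checking by hand, but the underlying argument is the same.
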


\begin{proof}
We use the approach of Sims \cite[page 441ff]{Sim94} to determine the 
desired polynomials. As a first step, we determine the polynomials 
$r_{i,j,k}$ with
\[ g_i^x g_j^y = g_j^y g_i^x
         g_{i+1}^{r_{i,j,i+1}(x,y)} \cdots g_n^{r_{i,j,n}(x,y)}\]
for $1 \leq j < i \leq 5$ and all $x, y \in \Z$. Using
\cite[5.1.5]{Rob82}, we note that the following conditions hold
for arbitrary group elements $g,h$ and $x \in \Z$:
\begin{eqnarray*}
(1) \ [[g,h],g] &=& 1 \mbox{ implies that } [g^x,h]=[g,h]^x,  \\
(2) \ [[g,h],h] &=& 1 \mbox{ implies that }  [g,h^x]=[g,h]^x,\\
(3) \ [[g,h],g] &=& [[g,h],h]=1 \mbox{ implies that } 
          (gh)^x=g^xh^x[g,h]^{s_2(x)}.
\end{eqnarray*}
As $[g_4, g_i]$ is central in $G(t)$, we thus obtain that
\begin{eqnarray*}
g_4^x g_1^y &=& g_1^yg_4^x [g_4^x,g_1^y]\\
            &=& g_1^yg_4^x [g_4,g_1^y]^x\\
            &=& g_1^yg_4^x [g_4,g_1]^{xy}\\
            &=& g_1^y g_4^x g_5^{xy t_{145}}, \\
            && \ \\
g_4^x g_2^y &=& g_2^y g_4^x g_5^{xy t_{245}}, \\
g_4^x g_3^y &=& g_3^y g_4^x g_5^{xy t_{345}}. 
\end{eqnarray*}
Next, we compute the equations for $g_3^x g_1^y$ and $g_3^x g_2^y$ in 
two steps. First, we show that induction on $y$ yields that
\[(*)\;\;\;g_3^{g_1^y} = 
  g_3 g_4^{y t_{134}} g_5^{ yt_{135} + s_2(y) t_{134} t_{145}}.\]
This equation is clearly valid for $y = 0$ and $y = 1$. Assume that it 
is valid for $y-1$. Then 
  \begin{eqnarray*}
 g_3^{g_1^y} &=& (g_3^{g_1^{y-1}})^{g_1}\\
 &=& (g_3 g_4^{(y-1)t_{134}} 
      g_5^{(y-1)t_{235}+s_2(y-1)t_{134}t_{145}})^{g_1}\\
 &=&  g_3^{g_1} (g_4^{g_1})^{(y-1)t_{134}} 
      (g_5^{g_1})^{(y-1)t_{135}+s_2(y-1)t_{134}t_{145}}\\
 &=&  g_3g_4^{t_{134}}g_5^{t_{135}} (g_4g_5^{t_{145}})^{(y-1)t_{134}} 
      g_5^{(y-1)t_{134}+s_2(y-1)t_{134}t_{145}}\\
 &=&  g_3g_4^{t_{134}}g_5^{t_{135}} 
      g_4^{(y-1)t_{134}}g_5^{(y-1)t_{134}t_{145}} 
      g_5^{(y-1)t_{135}+s_2(y-1)t_{134}t_{145}}\\
 &=&  g_3 g_4^{yt_{134}} 
      g_5^{yt_{135}+s_2(y-1)t_{134}t_{145}+(y-1)t_{134}t_{145}}\\
 &=&  g_3 g_4^{yt_{134}} 
      g_5^{yt_{135}+((y-2)(y-1)/2)t_{134}t_{145}+(y-1)t_{134}t_{145}}\\
 &=&  g_3 g_4^{yt_{134}} g_5^{yt_{135}+(y(y-1)/2)t_{134}t_{135}}\\
 &=&  g_3 g_4^{yt_{134}} g_5^{yt_{135}+s_2(y)t_{134}t_{135}}.
\end{eqnarray*}
This proves the desired formula for $y \geq 0$. An analogue computation 
leads to $g_3^{g_1^y}=g_3 g_4^{yt_{134}} 
g_5^{yt_{135}+s_2(y)t_{134}t_{145}}$ for $y \leq 0$.\\
Next note that $(g_3^x)^{g_1^y} = (g_3^{g_1^y})^x$ holds. Hence (3) and
$(*)$ yield
\begin{eqnarray*}
(g_3^x)^{g_1^y} 
   &=& (g_3^{g_1^y})^x \\
   &=& (g_3 g_4^{y t_{134}} g_5^{y t_{135} + s_2(y)t_{134} t_{145}})^x \\
   &=& (g_3 g_4^{y t_{134}})^x g_5^{xyt_{135} + x s_2(y)t_{134} t_{145}} \\
   &=& g_3^x g_4^{xy t_{134}} [g_3, g_4]^{s_2(x)y t_{134}} 
       g_5^{xy t_{135} + xs_2(y) t_{134} t_{145}} \\
   &=& g_3^x g_4^{xy t_{134}} 
       g_5^{s_2(x)y t_{134} t_{345} + xyt_{135} + x s_2(y)t_{134} t_{145}}. 
\end{eqnarray*}
Using this and the similar calculation for $g_3^x g_2^y$ we obtain that
\begin{eqnarray*}
g_3^x g_1^y &=& g_1^y g_3^x g_4^{xy t_{134}}
       g_5^{s_2(x)y t_{134} t_{345} + xy t_{135} + x s_2(y) t_{134} t_{145}}, \\
g_3^x g_2^y &=& g_2^y g_3^x g_4^{xy t_{234}}
       g_5^{s_2(x)y t_{234} t_{345} + xy t_{235} + x s_2(y) t_{234} t_{245}}.
\end{eqnarray*}
We have now determined all polynomials $r_{ijk}$ in the subgroup $G_2 =
\langle g_2, \ldots, g_5 \rangle$. Based on this, we can use collection 
to determine the Hall polynomials of $G_2$ with this:
\begin{eqnarray*}
 g^ag^b &=& g_2^{a_2}g_3^{a_3}g_4^{a_4}g_5^{a_5}
            g_2^{b_2}g_3^{b_3}g_4^{b_4}g_5^{b_5}\\
        &=& g_2^{a_2}g_3^{a_3}g_2^{b_2}g_4^{a_4}
            g_5^{a_4b_2t_{245}}g_5^{a_5}g_3^{b_3}g_4^{b_4}g_5^{b_5}\\
        &=& g_2^{a_2+b_2}g_3^{a_3} g_4^{a_3b_2t_{234}+a_4}
            g_5^{s_2(a_3)b_2t_{234}t_{345}+a_3b_2t_{235}
                                    +a_3s_2(b_2)t_{134}t_{245}}
            g_5^{a_4b_2t_{245}+a_5}g_3^{b_3}g_4^{b_4}g_5^{b_5}\\
        &=& g_2^{a_2+b_2}g_3^{a_3+b_3} g_4^{a_3b_2t_{234}+a_4}
            g_5^{b_3a_3b_2t_{234}t_{345}+b_3a_4t_{345}}\\
        &&  g_5^{s_2(a_3)b_2t_{234}t_{345}+a_3b_2t_{235}
              +a_3s_2(b_2)t_{134}t_{245}+a_4b_2t_{245}+a_5}
                    g_4^{b_4}g_5^{b_5}\\
        &=& g_2^{a_2+b_2}g_3^{a_3+b_3} g_4^{a_3b_2t_{234}+a_4+b_4}\\
        &&  g_5^{b_3a_3b_2t_{234}t_{345}+b_3a_4t_{345}
               +s_2(a_3)b_2t_{234}t_{345}
               +a_3b_2t_{235}+a_3s_2(b_2)t_{134}t_{245}
               +a_4b_2t_{245}+a_5+b_5}.
 \end{eqnarray*}
Thus the  the multiplication in $G_2$ is given by
\[ g_2^{a_2} \cdots g_5^{a_5} \cdot g_2^{b_2} \cdots g_5^{b_5}
  = g_2^{f_2(a,b)} \cdots g_5^{f_5(a,b)}\]
with
\begin{eqnarray*}
f_2(a,b) &=& a_2+b_2, \\
f_3(a,b) &=& a_3+b_3, \\
f_4(a,b) &=& a_4+b_4+a_3b_2t_{234}, \\
f_5(a,b) &=& a_5+b_5 +a_3 b_2 t_{235} +a_4 b_2 t_{245} +a_4 b_3 t_{345} \\
         && + a_3 s_2(b_2) t_{234} t_{245}
            + s_2(a_3) b_2 t_{234} t_{345}
            + a_3 b_2 b_3 t_{234} t_{345}.
\end{eqnarray*}
Further, these multiplication polynomials allow to determine powering 
polynomials for $G_2$. The resulting polynomials with
\[ (g_2^{a_2} \cdots g_5^{a_5})^x = g_2^{k_2(a,x)} \cdots g_5^{k_5(a,x)}\]
are given by
\begin{eqnarray*}
k_2(a,x)&=& f_2(k_2(a,x-1),a)\\
        &=& k_2(a,x-1)+a_2\\
        &\vdots& \\
        &=& k_2(a,1)+(x-1)a_2\\
        &=& x a_2, \\ 
        && \ \\
k_3(a,x)&=& f_3(k_3(a,x-1),a)\\
        &=& k_3(a,x-1)+a_3\\
        &\vdots&\\
        &=& k_3(a,1)+(x-1)a_3\\
        &=& x a_3, \\
        && \ \\
k_4(a,x)&=& f_4(k_4(a,x-1),a)\\
        &=& k_4(a,x-1)+a_4+k_3(a,x-1)a_2t_{234}\\
        &=& k_4(a,x-1)+a_4+(x-1)a_3a_2t_{234}\\
        &=& k_4(a,x-2)+2a_4+(x-1+x-2)a_3a_2t_{234}\\
        &\vdots&\\
        &=& k_4(a,1)+(x-1)a_4+((x-1)x-\sum_{i=1}^{x-1}{i})a_3a_2t_{234}\\
        &=& x a_4 + s_2(x) a_2 a_3 t_{234}, \\
        && \ \\
k_5(a,x)&=& f_5(k_5(a,x-1))\\
        &=& k_5(a,x-1)+a_5+k_3(a,x-1)a_2t_{235}+k_4(a,x-1)a_2t_{245}\\
        &&  +k_4(a,x-1)a_3t_{345}+k_3(a,x-1)s_2(a_2)t_{234}t_{245}\\
        &&  +s_2(k_3(a,x-1))a_2t_{234}t_{345}+k_3(a,x-1)a_2a_3t_{234}t_{345}\\
        &=& k_5(a,x-1)+a_5+(x-1)a_3a_2t_{235}
            +((x-1)a_4+s_2(x-1)a_2a_3t_{234})a_2t_{245}\\
        &&  +((x-1)a_4+s_2(x-1)a_2a_3t_{234})a_3t_{345}
            +(x-1)a_3s_2(a_2)t_{234}t_{245}\\
        &&  +(a_3^2s_2(x-1)+(x-1)s_2(a_3))a_2t_{234}t_{345}
            +(x-1)a_3a_2a_3t_{234}t_{345}\\   
        &=& k_5(a,x-2)+2a_5
            +(x-1+x+1)(a_2a_3t_{235}+a_2a_4t_{245}+a_3a_4t_{345})\\
        &&  +(s_2(x-2)+s_2(x-2))(a_2^2a_3t_{234}t_{245}+a_2a_3^2t_{234}t_{345}
                                 +a_3^2a_2t_{234}t_{345})\\
        &&  +(x-1+x-2)(s_2(a_3)a_2t_{234}t_{345}+a_2a_3^2t_{234}t_{345}
                      +s_2(a_2)t_{234}t_{345})\\
        &=& \cdots\\
        &\vdots&\\
        &=& k_5(a,1)+(x-1)a_5
            s_2(x) (a_2a_3t_{235}+a_2a_4t_{245}+a_3a_4t_{345})\\
        &&  +s_3(x)(a_2^2a_3t_{234}t_{245}+a_2a_3^2t_{234}t_{345}
                     +a_3^2a_2t_{234}t_{345})\\
        &&  +s_2(x) (s_2(a_3)a_2t_{234}t_{345}+a_2a_3^2t_{234}t_{345}
                      +s_2(a_2)t_{234}t_{345})\\        
        &=& x a_5 
        + s_2(x) (a_2 a_3 t_{235} + a_2 a_4 t_{245} + a_3 a_4 t_{345} ) \\
         && + s_2(x) a_2 a_3 t_{234} 
          (t_{245} ((2x-1)a_2-3) + t_{345} ((4x+1)a_3 -3))/6.
\end{eqnarray*}
We now consider the computation for $g_2^x g_1^y$. This is done in two steps.
At first we investigate the conjugate $g_2^{g_1^y}$ and write this as
$g_2 g_3^{r(y)} g_4^{s(y)} g_5^{t(y)}$ for polynomials $r(y), s(y), t(y)$. 
We determine these recursively using the polynomials $f_2,\dots, f_5$: 
\begin{eqnarray*}
(g_2)^{g_1^{y}} &=& ((g_2)^{g_1^{y-1}})^{g_1}\\
  &=& (g_2 g_3^{r(y-1)} g_4^{s(y-1)} g_5^{t(y-1)})^{g_1}\\
  &=& g_2^{g_1}(g_3^{g_1})^{r(y-1)}(g_4^{g_1})^{s(y-1)}
      (g_5^{g_1})^{t(y-1)}\\
  &=& g_2g_3^{t_{123}}g_4^{t_{124}}g_5^{t_{125}}
      g_3^{r(y-1)}g_4^{r(y-1)t_{134}}
          g_5^{s_2(r(y-1))t_{134}t_{345}+r(y-1)t_{135}}
      g_4^{s(y-1)}g_5^{s(y-1)t_{145}}\\
  &&  g_5^{t(y-1)}\\
  &=& (g_2g_3^{t_{123}}g_4^{t_{124}}g_5^{t_{125}})
      (g_3^{r(y-1)}g_4^{r(y-1)t_{134}+s(y-1)}\\
  &&    g_5^{s_2(r(y-1))t_{134}t_{345}+r(y-1)t_{135}
             +s(y-1)t_{145}+t(y-1)})\\
  &=& g_2^{1+0} g_3^{t_{123}+r(y-1)}
      g_4^{t_{124}+r(y-1)t_{134}+s(y-1)}\\
  &&  g_5^{t_{125}+t(y-1)+s_2(r(y-1))t_{134}t_{345}         
           +r(y-1)t_{135}+s(y-1)t_{145}
           +t_{124}r(y-1)t_{345}}.    
\end{eqnarray*}
We obtain the following conditions: 
\begin{eqnarray*}
r(y) &=& r(y-1) + t_{123}, \\
s(y) &=& s(y-1) + t_{124} + r(y-1) t_{134}, \\
t(y) &=& t(y-1) + t_{125} 
            + s_2(r(y-1)) t_{134} t_{345} 
            + r(y-1) (t_{124} t_{345} + t_{135})
            + s(y-1) t_{145}. 
\end{eqnarray*}

Solving these recursions and using that $t_{123} t_{345} = 0$ yields
\begin{eqnarray*}
r(y) &=& r(y-1)+t_{123}\\
     &=& r(y-2)+t_{123}+t_{123}\\
     &\vdots& \\
     &=& r(1)+ (y-1) t_{123}\\
     &=& y t_{123}, \\
     && \ \\
s(y) &=& s(y-1)+ t_{124} + (y-1)t_{123}t_{134}\\
     &=& s(y-2) +2t_{124} + (y-1+y-2) t_{123}t_{134}\\
     &\vdots& \\
     &=& s(1)+ (y-1) t_{124}+ ((y-1)y-\sum_{i=1}^{y-1}{i})t_{123}t_{134}\\
     &=& yt_{124} +(y(y-1)/2)t_{123}t_{134}\\
     &=& y t_{124} + s_2(y) t_{123} t_{134}, \\
     && \ \\
t(y) &=& t(y-1)+t_{125}+(t_{123}^2s_2(y-1)+(y-1)s_2(t_{123}))t_{134}t_{345}\\
     &&  +(y-1)t_{123}(t_{124}t_{345}+t_{135})+(y-1)t_{124}t_{145}
         +s_2(y-1)t_{123}t_{134}t_{145}\\
     &=& t(y-2)+2t_{125}
         + (s_2(y-1)+s_2(y-2))t_{123}^2t_{134}t_{345}\\
     &&  +(y-1+y-2)s_2(t_{123})t_{134}t_{345}
         +(y-1+y-2) t_{123}(t_{124}t_{345}+t_{135}) \\
     &&  +(y-1+y-2) t_{124}t_{145}
         +(s_2(y-1)+s_2(y-2))t_{123}t_{134}t_{145}\\
     &=& \cdots\\
     &\vdots&\\    
     &=& t(1)+(y-1)t_{125}\\
     &&  +(y(y-1)/2)(s_2(t_{123})t_{134}t_{345}+t_{123}(t_{124}t_{345}+t_{135})
                     +t_{124}t_{145})\\
     &&  +(\sum_{i=1}^{y-1}{s_2(i)})t_{123}t_{134}(t_{123}t_{345}+t_{145})\\            
     &=& y t_{125} 
      + s_2(y)( s_2(t_{123}) t_{345} t_{134} 
           + t_{123} (t_{124} t_{345} + t_{135})
           + t_{124} t_{145} ) \\
     &&
      + s_3(y) t_{123} t_{134} (t_{123} t_{345} + t_{145} )\\
       &=& y t_{125} 
            + s_2(y)(t_{123} t_{135} + t_{124} t_{145} ) 
            + s_3(y) t_{123} t_{134} t_{145}.
\end{eqnarray*}
Using the polynomials $k_2, \ldots, k_5$ we now determine
\[ g_2^x g_1^y
  = g_1^y (g_2^{g_1^y})^x
  = g_1^y (g_2 g_3^{r(y)} g_4^{s(y)} g_5^{t(y)})^x
  = g_1^y g_2^x g_3^{R(x,y)} g_4^{S(x,y)} g_5^{T(x,y)} \]
with
\begin{eqnarray*}
R(x,y) &=& x r(y) = xy t_{123}, \\
S(x,y) &=& x s(y) + s_2(x) r(y) t_{234} \\
       &=& xy t_{124} + x s_2(y) t_{123} t_{134} + s_2(x) y t_{123} t_{234}, \\
T(x,y) &=&
     x t(y) + s_2(x) (r(y) t_{235} + s(y) t_{245} + r(y) s(y) t_{345} ) \\
    && + s_2(x) r(y) t_{234} (t_{245} ((2x-1)-3) + t_{345} ((4x+1)r(y) -3))/6.
\end{eqnarray*}
Thus we have determined all polynomials $r_{i,j,k}$. Based on these, one
can use the following GAP program to perform a symbolic collection and 
thus obtain $p_1, \ldots, p_5$. This program assumes that $r$ and $t$ are
global variables so that $r[i][j][k]$ contains the polynomial $r_{ijk}$
and $t[i][j][k]$ is the indeterminate $t_{ijk}$.

\begin{verbatim}
CollectSymbolic := function( )
    local a, b, v, stack, c, i, j, k, m;

    # initiate variables
    a := List([1..5],  
         x -> Indeterminate(Rationals, Concatenation("a",String(x))));
    b := List([1..5], 
         x -> Indeterminate(Rationals, Concatenation("b",String(x))));
    v := StructuralCopy(a);
    v[5] := v[5] + b[5];
    stack := Reversed(List([1..4], x -> [x, b[x]]));

    # perform collection until stack is empty
    while Length(stack) > 0 do
        c := stack[Length(stack)];
        Unbind(stack[Length(stack)]);
        if c[2] <> 0*c[2] then
            i := c[1];
            v[i] := v[i] + c[2];
            for j in Reversed([i+1..4]) do
                for k in Reversed([1..Length(r[i][j])]) do
                    m := StructuralCopy(r[i][j][k]);
                    m[2] := Value(m[2], [x,y], [v[j], c[2]]);
                    if m[1] = 5 then
                        v[5] := v[5] + m[2];
                    elif m[2] <> 0*m[2] then
                        Add(stack, m);
                    fi;
                od;
                v[j] := 0;
            od;
        fi;
    od;

    # divide by consistency relations to simplify polynomial
    v[5] := PolynomialReduction(v[5], [t[1][2][3]*t[3][4][5]], 
                                             MonomialLexOrdering())[1];
    return v;
end;
\end{verbatim}
\end{proof}

If $G$ is a $\T$-group of Hirsch length $n < 5$, then Hall polynomials for 
$G$ can also be read off from Theorem \ref{hall}: These are given by 
$p_1, \ldots, p_n$.

\def\cprime{$'$} \def\cprime{$'$}

\end{document}